\newtheorem{theorem}{Theorem}[section]
\newtheorem{claim}[theorem]{Claim}
\newtheorem{proposition}[theorem]{Proposition}
\newtheorem{question}[theorem]{Question}
\newcommand{\be}{\begin{equation}}
\newcommand{\ee}{\end{equation}}
\newcommand{\bea}{\begin{eqnarray}}
\newcommand{\eea}{\end{eqnarray}}
\begin{document}

\title[On the almost one half holomorphic pinching]
{A note on the almost one half holomorphic pinching}

\author{Xiaodong Cao}
\thanks{Research partially supported by a
Simons Collaboration Grant}
\address{Xiaodong Cao. Department of Mathematics, 310 Malott Hall,
Cornell University, Ithaca, NY 14853-4201, USA.}
\email{xiaodongcao@cornell.edu}

\author{Bo Yang}
\thanks{Research partially supported by an AMS-Simons Travel
Grant}

\address{Bo Yang. Department of Mathematics, 310 Malott Hall,
Cornell University, Ithaca, NY 14853-4201, USA.}
\email{{boyang@math.cornell.edu} }

\begin{abstract}

Motivated by a previous work of Zheng and the second named author,
we study pinching constants of compact K\"ahler manifolds with
positive holomorphic sectional curvature. In particular we prove a
gap theorem following the work of Petersen and Tao on Riemannian
manifolds with almost quarter-pinched sectional curvature.

\end{abstract}

\date{This is the version which the authors
submitted to a journal for consideration for publication in June
2017. The reference has not been updated since then}

\maketitle

\section{The theorem}

Let $(M, J, g)$ be a complex manifold with a K\"ahler metric $g$,
one can define the \emph{holomorphic sectional curvature} ($H$) of
any $J$-invariant real $2$-plane $\pi=\operatorname{Span}\{X, JX\}$
by
\[
H(\pi)=\frac{R(X, JX, JX, X)}{||X||^4}.
\]
It is the Riemannian sectional curvature restricted on any
$J$-invariant real $2$-plane (p165 \cite{KN}). In terms of complex
coordinates, it is equivalent to write
\[
H(\pi)=\frac{R(V, \overline{V}, V, \overline{V})}{||V||^4}
\] where $V=X-\sqrt{-1}JX \in T^{1,0}(M)$.

In this note we study pinching constants of compact K\"ahler
manifolds with positive holomorphic sectional curvature ($H>0$). The
goal is to prove the following rigidity result on a K\"ahler
manifold with the almost one half pinching.

\begin{theorem} \label{almost half}
For any integer $n \geq 2$, there exists a positive constant
$\epsilon(n)$ such that any compact K\"ahler manifold with
$\frac{1}{2}-\epsilon(n) \leq H \leq 1$ of dimension $n$ is
biholomorphic to any of the following
\begin{enumerate}
\item  $\mathbb{CP}^{n}$,
\item  $\mathbb{CP}^{k} \times \mathbb{CP}^{n-k}$,
\item  An irreducible rank $2$ compact Hermitian symmetric space of
dimension $n$.
\end{enumerate}

\end{theorem}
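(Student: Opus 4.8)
The plan is to follow the scheme of Petersen and Tao: argue by contradiction, extract a convergent subsequence of hypothetical counterexamples, and reduce everything to the rigidity of the boundary case $H=\tfrac12$.

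The first step is to isolate this endpoint rigidity: a compact K\"ahler $n$-manifold $(M,J,g)$ with $\tfrac12\le H\le 1$ is biholomorphic to one of (1)--(3). The input is the Berger-type inequality relating the holomorphic sectional curvature to the (orthogonal) holomorphic bisectional curvature --- in the present normalization, $H\ge\tfrac12$ forces the orthogonal bisectional curvature of $g$ to be nonnegative, with the rank $\le 2$ Hermitian symmetric spaces as the equality models --- together with the averaging formula for the Ricci tensor, which turns $H\ge\tfrac12$ into $\operatorname{Ric}(g)>0$. Thus $M$ is Fano, hence simply connected (Kobayashi), and Mok's uniformization theorem (and its extension by Gu--Zhang to nonnegative orthogonal bisectional curvature) writes $M$ as a product $\prod_i\mathbb{CP}^{n_i}\times\prod_j N_j$ of projective spaces and irreducible compact Hermitian symmetric spaces. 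Evaluating $H$ on a holomorphic $2$-plane equidistributed among three of the factors, or among one $\mathbb{CP}$ factor and one factor of rank $\ge 2$, or on a single factor of rank $\ge 3$, produces in each case a value strictly below $\tfrac12$; so the only survivors are exactly (1), (2), (3).

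Next I would set up the contradiction. Assuming no $\epsilon(n)$ works, fix a sequence $(M_i,J_i,g_i)$ of compact K\"ahler $n$-manifolds with $\tfrac12-\tfrac1i\le H_{g_i}\le 1$, none biholomorphic to (1)--(3). Since the curvature tensor of a K\"ahler metric is recovered algebraically (by polarization) from $H$, the bound $H\le 1$ gives $|\mathrm{Rm}_{g_i}|\le C(n)$; the bound $H\ge\tfrac14$ (for large $i$) gives $\operatorname{Ric}_{g_i}\ge c(n)>0$, hence $\operatorname{diam}(M_i,g_i)\le D(n)$ and $\pi_1(M_i)=1$. For non-collapsing I would use that $M_i$ is Fano: $\int_{M_i}c_1(M_i)^n=(-K_{M_i})^n$ is a positive integer, while the two-sided curvature bound forces $\int_{M_i}c_1(M_i)^n\le C'(n)\operatorname{Vol}(M_i,g_i)$, so $\operatorname{Vol}(M_i,g_i)\ge v(n)>0$. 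By the Cheeger--Gromov compactness theorem, after passing to a subsequence there are diffeomorphisms $\phi_i\colon M_\infty\to M_i$ with $\phi_i^*g_i\to g_\infty$ in $C^{1,\alpha}$; since each $J_i$ is $g_i$-parallel, a further subsequence gives $\phi_i^*J_i\to J_\infty$ for a $g_\infty$-parallel, hence integrable, complex structure, so $(M_\infty,J_\infty,g_\infty)$ is a $C^{1,\alpha}$ K\"ahler manifold. Running the K\"ahler--Ricci flow for a short time and invoking Shi's local derivative estimates (the flow preserves the complex structure) smooths the metrics with uniform bounds, so $g_\infty$ is smooth K\"ahler, the convergences improve to $C^\infty$, and the pinching passes to the limit: $\tfrac12\le H_{g_\infty}\le 1$.

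Finally I would invoke the endpoint rigidity: $(M_\infty,J_\infty)$ is biholomorphic to one of (1)--(3), all three of which satisfy $H^1(M_\infty,T^{1,0}M_\infty)=0$ and hence, by Kuranishi theory, are locally rigid as complex manifolds. Since the integrable complex structures $\phi_i^*J_i$ on the fixed manifold $M_\infty$ converge to $J_\infty$ in $C^\infty$, for $i$ large $\phi_i^*J_i$ is biholomorphic to $J_\infty$, so $(M_i,J_i)$ is biholomorphic to one of (1)--(3), contradicting the choice of the sequence; hence $\epsilon(n)$ exists. I expect the main obstacle to be the regularity step in the previous paragraph, namely upgrading the Cheeger--Gromov ($C^{1,\alpha}$, metric) convergence to $C^\infty$ convergence of the underlying \emph{complex} structures --- only with smooth convergence of the $J_i$ can the complex-analytic rigidity $H^1(T^{1,0})=0$ of the three model spaces be brought to bear --- with a secondary technical point, special to this K\"ahler setting, being the non-collapsing estimate extracted above from $(-K_{M_i})^n\in\mathbb{Z}_{>0}$.
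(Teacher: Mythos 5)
Your overall strategy (argue by contradiction, extract a convergent subsequence, smooth by Ricci flow, reduce to the exact $\tfrac12$-pinched rigidity of Proposition \ref{half pinching}, and finish with $H^1(M_\infty,\Theta_{M_\infty})=0$) is the same as the paper's, and one of your ingredients is a genuinely nice alternative: the non-collapsing bound $1\le(-K_{M_i})^n=\int_{M_i}c_1^n\le C(n)\operatorname{Vol}(M_i,g_i)$ replaces the paper's Klingenberg-type injectivity radius estimate (Step 3). But there is a genuine gap at the decisive step, namely the assertion that after smoothing by the K\"ahler--Ricci flow ``the pinching passes to the limit: $\tfrac12\le H_{g_\infty}\le1$.'' The unflowed Cheeger--Gromov limit is only $C^{1,\alpha}$, so $H_{g_\infty}$ has no pointwise meaning there; and along the flow neither $H>0$ nor the pinching ratio is known to be preserved (the paper explicitly records this as an open problem in its last section). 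Shi's estimates do not rescue this: they give $|\nabla^2\mathrm{Rm}|\le C/t$, so integrating the evolution equation of the curvature from $0$ to $t$ diverges. The paper's Step 2 (the Ilmanen--Shi--Rong estimate, Proposition 2.5 of \cite{Rong}) is exactly the missing ingredient: the pinching degrades only linearly, $\tfrac12-c(n)t\le H(g_k(t))\le1+c(n)t$. Even granting this, one never obtains a smooth limit with \emph{exact} $\tfrac12\le H\le1$, which is why the paper needs a second device you omit entirely: the maximum-principle argument of Step 4, producing the lower bound $-\tfrac1k e^{c(n)t}$ on orthogonal bisectional curvature along the flow, so that the smooth limit flow has \emph{nonnegative orthogonal bisectional curvature} at positive times. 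The classification of Chen, Gu--Zhang and Wilking is then applied to $g_\infty(t_0)$ under that condition, and the almost-$\tfrac12$ pinching at small times $t_1$ serves only to cut the resulting product decomposition down to the three models. Without substitutes for Steps 2 and 4, your smooth limit carries no curvature condition to which any rigidity theorem applies.

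A secondary error: you derive $\operatorname{Ric}_{g_i}\ge c(n)>0$ from $H\ge\tfrac14$ alone. Quarter holomorphic pinching does not imply positive Ricci curvature in general: when the pinching constant is below $\tfrac12$, Berger's inequality only bounds the orthogonal bisectional curvature below by a \emph{negative} constant, and $\operatorname{Ric}(X,\overline X)=H(X)+\sum_{\alpha\ge2}R_{X\overline X\alpha\overline\alpha}$ contains $n-1$ such terms, so the sum can go negative for large $n$ (indeed the paper stresses that $H>0$ does not imply positive Ricci curvature). What saves you is the actual hypothesis $H\ge\tfrac12-\tfrac1i$: for $i$ large depending on $n$, Berger's inequality makes the orthogonal bisectional curvature $\ge-\epsilon$ and hence $\operatorname{Ric}>0$; this is precisely how the paper obtains item (3) of its Step 5, from Steps 2 and 4. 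With that correction your Fano/non-collapsing step is fine, but the main gap of the previous paragraph remains.
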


Before we discuss the proof, let us review some background on
compact K\"ahler manifolds with $H>0$. The condition $H>0$ is less
understood and seems mysterious. For example, $H>0$ does not imply
positive Ricci curvature, though it leads to positive scalar
curvature. Essentially one has to work on a fourth order tensor from
the viewpoint of linear algebra, while usually the stronger notion
of holomorphic bisectional curvature leads to bilinear forms.

Naturally one may wonder if there is a characterization of such an
interesting class of K\"ahler manifolds. In particular, Yau
(\cite{Yau1} and \cite{Yau2}) asked if the positivity of holomorphic
sectional curvature can be used to characterize the rationality of
algebraic manifolds. For example, is such a manifold a rational
variety? There is much progress on K\"ahler surfaces with $H>0$. In
1975 Hitchin \cite{Hitchin} proved that any compact K\"ahler surface
with $H>0$ must be a rational surface and conversely he constructed
examples of such metrics on any Hirzebruch surface $M_{2,
k}=\mathbb{P}(H^{k}\oplus 1_{\mathbb{CP}^1})$. It remains an
interesting question to find out if K\"ahler metrics of $H>0$ exist
on other rational surfaces.

In higher dimensions, much less is known on $H>0$ except recent
important works of Heier-Wong (see \cite{HeierWong2015} for
example). One of their results states that any projective manifold
which admits a K\"ahler metric with $H>0$ must be rationally
connected. It could be possible that any K\"ahler manifold with
$H>0$ is in fact projective, again it is an open question. We also
remark that some generalization of Hitchin's construction of
K\"ahler metrics of $H>0$ in higher dimensions has been obtained in
\cite{AHZ}.

If Yau's conjecture is true, then how do we study the complexities
of rational varieties which admit K\"ahler metrics with $H>0$? A
naive thought is that the global and local holomorphic pinching
constants of $H$ should give a stratification among all such
rational varieties. Here the local holomorphic pinching constant of
a K\"ahler manifold $(M, J, g)$ of $H>0$ is the maximum of all
$\lambda \in (0,1]$ such that $0<\lambda H(\pi^{,}) \leq H(\pi) $
for any $J-$invariant real $2-$planes $\pi, \pi^{,} \subset T_p(M)$
at any $p \in M$, while the global holomorphic pinching constant is
the maximum of all $\lambda \in (0,1]$ such that there exists a
positive constant $C$ so that $\lambda C \leq H(p,\pi) \leq C$ holds
for any $p \in M$ and any $J$-invariant real 2-plane $\pi \subset
T_p(M)$. Obviously the global holomorphic pinching constant is no
larger than the local one, and there are examples of K\"ahler
metrics with different global and local holomorphic pinching
constants on Hirzebruch manifolds (\cite{YZ}).

In a previous work of Zheng and the second named author (\cite{YZ}),
we observe the following result, which follows from some pinching
equality on $H>0$ due to Berger \cite{Berger1960} and recent works
on nonnegative orthogonal bisectional curvature (\cite{ChenX},
\cite{GuZhang}, and \cite{Wilking}).

\begin{proposition} [\cite{YZ}]
\label{half pinching} Let $(M^n,g)$ be a compact K\"ahler manifold
with $0<\lambda \leq H \leq 1$ in the local sense for some constant
$\lambda$, then the following holds:

(1) If $\lambda>\frac{1}{2}$, then $M^n$ is bibolomorphic to
$\mathbb{CP}^{n}$.

(2) If $\lambda=\frac{1}{2}$, then $M^n$ satisfies one of the
following
\begin{enumerate}
\item $M^n$ is biholomorphic to $\mathbb{CP}^{n}$.
\item $M^n$ is holomorphically isometric to $\mathbb{CP}^k \times
\mathbb{CP}^{n-k}$ with a product metric of Fubini-Study metrics.
Moreover, each factor must have the same constant $H$.
\item $M^n$ is holomorphically isometric to an irreducible compact
Hermitian symmeric space of rank $2$ with its canonical
K\"ahler-Einstein metric.
\end{enumerate}
\end{proposition}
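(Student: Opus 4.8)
The plan is to argue by contradiction in the spirit of Petersen and Tao, using Proposition \ref{half pinching} as the rigidity input at the limit. Suppose Theorem \ref{almost half} fails for some integer $n\ge 2$. Then there is a sequence $(M_i,J_i,g_i)$ of compact K\"ahler manifolds of complex dimension $n$ with $\tfrac12-\tfrac1i\le H_{g_i}\le 1$, none of which is biholomorphic to $\mathbb{CP}^n$, to a product $\mathbb{CP}^k\times\mathbb{CP}^{n-k}$, or to an irreducible rank $2$ compact Hermitian symmetric space of dimension $n$. I will show this is impossible.

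First I would record uniform geometric bounds. By Berger's lemma the full curvature tensor of a K\"ahler metric is controlled by $\sup|H|$, so $|R_{g_i}|\le C_1(n)$; in particular the sectional and Ricci curvatures are uniformly bounded. By Berger's pinching estimate for the orthogonal holomorphic bisectional curvature — the estimate behind Proposition \ref{half pinching} — the almost one half pinching forces $\mathrm{Ric}_{g_i}>0$ for $i$ large (the Ricci curvature being the sum over a unitary frame of $H$ and $n-1$ orthogonal holomorphic bisectional curvatures, each at least $-c(n)/i$). Hence each $M_i$ is Fano, so projective, and $\mathrm{Ric}_{g_i}\le C_1(n)g_i$ gives, for the Ricci form $\rho_i$ (whose class is $2\pi c_1(M_i)$) and the K\"ahler form $\omega_i$ of $g_i$, the pointwise inequality $\rho_i^{\,n}\le C_1(n)^n\,\omega_i^{\,n}$. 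Next $H_{g_i}\ge\tfrac14$ gives a Bonnet--Myers type bound $\mathrm{diam}(M_i,g_i)\le 2\pi$: along a minimizing geodesic $\gamma$ the field $J_i\dot\gamma$ is parallel, so the second variation of arclength applied to $f(t)J_i\dot\gamma$ together with $H_{g_i}(\dot\gamma)\ge\tfrac14$ forbids geodesics of length $>2\pi$. Finally, integrating the curvature inequality and using that $\int_{M_i}c_1(M_i)^n$ is a positive integer for a Fano manifold,
\[
\mathrm{Vol}(M_i,g_i)=\frac{1}{n!}\int_{M_i}\omega_i^{\,n}\ \ge\ \frac{1}{n!\,C_1(n)^n}\int_{M_i}\rho_i^{\,n}=\frac{(2\pi)^n}{n!\,C_1(n)^n}\int_{M_i}c_1(M_i)^n\ \ge\ \frac{(2\pi)^n}{n!\,C_1(n)^n}\,>\,0,
\]
so the sequence is non-collapsed with bounded curvature and diameter.

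By the Cheeger--Gromov--Anderson compactness theorem I pass to a subsequence converging in the $C^{1,\alpha}$ topology (and in Gromov--Hausdorff distance) to a compact $C^{1,\alpha}$ Riemannian manifold $(M_\infty,g_\infty)$, with $M_i$ diffeomorphic to $M_\infty$ for $i$ large. Since each $J_i$ is $g_i$-parallel with $|J_i|_{g_i}\equiv\sqrt n$, the pulled-back $J_i$ are uniformly bounded in $C^{1,\alpha}$ and converge to a $g_\infty$-parallel integrable complex structure $J_\infty$; thus $(M_\infty,g_\infty,J_\infty)$ is K\"ahler, and the sectional curvature bounds pass to the limit in the Alexandrov sense, so that $\tfrac12\le H_{g_\infty}\le 1$ holds in a weak sense. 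To apply Proposition \ref{half pinching} I must upgrade this to a smooth K\"ahler manifold with $\tfrac12\le H_{g_\infty}\le 1$ in the local sense; following Petersen--Tao, this is done by evolving the rough limit metric by the (K\"ahler--)Ricci flow for a short time, using that the relevant curvature condition — nonnegativity of the orthogonal holomorphic bisectional curvature, equivalently the one half pinching by Berger's equality — lies in a Ricci-flow-invariant cone (the invariance results of Chen, Gu--Zhang and Wilking already used in Proposition \ref{half pinching}). Proposition \ref{half pinching}(2) then shows $(M_\infty,J_\infty)$ is biholomorphic to one of the three model spaces; in each case it is a rational homogeneous manifold, in particular rigid, i.e.\ $H^1(M_\infty,T^{1,0}M_\infty)=0$.

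It remains to deduce that $M_i$ itself is biholomorphic to a model for $i$ large. Evolving each $(M_i,g_i)$ by the K\"ahler--Ricci flow for a fixed short time $\tau=\tau(n)>0$ smooths the metrics while keeping the complex structures $J_i$ fixed, and Shi's derivative estimates give uniform bounds on all covariant derivatives of the flowed metrics; hence, along a further subsequence, $(M_i,g_i(\tau),J_i)\to(M_\infty,g_\infty(\tau),J_\infty)$ in $C^\infty$, and in particular $J_i\to J_\infty$ in $C^\infty$. Thus for $i$ large $(M_i,J_i)$ is a small deformation of the rigid rational homogeneous manifold $(M_\infty,J_\infty)$, hence biholomorphic to it (for the $\mathbb{CP}^n$ case one may alternatively invoke Yau's theorem that a compact K\"ahler manifold homeomorphic to $\mathbb{CP}^n$ is biholomorphic to it). This contradicts the choice of the $M_i$ and proves the theorem. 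The step I expect to be the main obstacle is the passage to the limit: the hypothesis controls only $H$, hence only the curvature tensor up to a dimensional constant and nothing about its derivatives, so the limit is a priori merely $C^{1,\alpha}$; giving rigorous meaning to the pinching condition on this limit and upgrading to a smooth pinched K\"ahler metric — for which the Ricci flow smoothing together with the cone invariance of the orthogonal bisectional curvature is the essential tool — is the delicate point, together with the final deduction of biholomorphism from $C^\infty$-closeness via the deformation rigidity of rational homogeneous manifolds.
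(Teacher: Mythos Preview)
Your proposal proves Theorem~\ref{almost half}, not the stated Proposition~\ref{half pinching}; the latter is only quoted from \cite{YZ} in this paper and has no proof here to compare against. I will therefore compare your argument with the paper's proof of Theorem~\ref{almost half} in Section~2.

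The overall architecture agrees: argue by contradiction, extract a limit, identify the limit via the $\lambda=\tfrac12$ rigidity, and conclude from deformation rigidity ($H^1(M_\infty,\Theta)=0$) of Hermitian symmetric spaces. Two substantive differences deserve comment.

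\emph{Order of limit and flow.} You take a Cheeger--Gromov $C^{1,\alpha}$ limit first and then try to smooth it by Ricci flow; the paper (and Petersen--Tao) run the K\"ahler--Ricci flow on each $(M_k,g_k)$ first and take a limit of the flows via Hamilton's compactness. The paper's order avoids exactly the obstacle you flag: your $C^{1,\alpha}$ limit has curvature only distributionally, ``$\tfrac12\le H_{g_\infty}\le 1$ in the Alexandrov sense'' has no clear meaning for holomorphic sectional curvature, and starting K\"ahler--Ricci flow from a $C^{1,\alpha}$ metric while preserving the pinching is not justified. By flowing smooth metrics and then passing to the limit, the paper gets a smooth limiting K\"ahler--Ricci flow directly. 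For non-collapsing the paper uses a Klingenberg-type injectivity-radius estimate with the variation field $J\gamma'$ (needed anyway for Hamilton's compactness); your Fano/$c_1^n\in\mathbb{Z}_{>0}$ volume bound is a pleasant alternative for non-collapsing but does not by itself give the injectivity-radius bound Hamilton's theorem requires.

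\emph{What is tracked along the flow.} You invoke only invariance of the cone of nonnegative orthogonal bisectional curvature. That is insufficient: the initial metrics have orthogonal bisectional curvature $\ge -\tfrac{1}{4k}$, not $\ge 0$. The key new estimate is the paper's Step~4 (Claim~\ref{Guzhang bounds}): along the flow the orthogonal bisectional curvature stays $\ge -\tfrac{1}{k}e^{c(n)t}$, obtained by rerunning the Gu--Zhang maximum-principle computation with the uniform curvature bound from Step~1 controlling the reaction term. This is what forces the limit to have nonnegative orthogonal bisectional curvature; the paper then applies the Chen/Gu--Zhang/Wilking classification (not Proposition~\ref{half pinching}) to the limit, and uses Step~2 (holomorphic pinching stays close to $[\tfrac12,1]$ for short time) to reduce the resulting product to the three listed cases. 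Reordering your argument to ``flow then limit'' and inserting this quantitative almost-nonnegativity estimate would bring it in line with the paper's proof.
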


Let us remark that in the case that K\"ahler manifold in Proposition
\ref{half pinching} is projective and endowed with the induced
metric from the Fubini-Study metric of the ambient projective space,
a complete characterization of such a projective manifold and the
corresponding embedding has been proved by Ros \cite{Ros}.

Comparing with Proposition \ref{half pinching}, we may view Theorem
\ref{almost half} as a rigidity result on compact K\"ahler manifolds
with almost one half-pinched $H>0$. For example, Hirzebruch
manifolds can not admit K\"ahler metrics whose global pinching
constants are arbitrarily close to $\frac{1}{2}$.

It is very interesting to find the next threshold for holomorphic
pinching constants and prove some characterization of K\"ahler
manifolds with such a threshold pinching constant. Before making any
reasonable speculation, it is helpful to understand examples on such
holomorphic constants of some canonical K\"ahler metrics. In this
regard, the K\"ahler-Einstein metric on a irreducible compact
Hermitian symmetric space has its holomorphic pinching constant
exactly the reciprocal of its rank (\cite{Chen1977}). The
K\"ahler-Einstein metrics on many simply-connected compact
homogeneous K\"ahler manifolds (K\"ahler $C$-spaces) also have
$H>0$, and it seems very tedious to work with corresponding Lie
algebras carefully to determine these holomorphic pinching constants
except in lower dimensions. It was observed in \cite{YZ} that the
flag $3$-manifold, the only K\"ahler $C$-space in dimesion $3$ which
is not Hermitian symmetric, has $\frac{1}{4}$-holomorphic pinching
for its canonical K\"ahler-Einstein metric. Note that
Alvarez-Chaturvedi-Heier \cite{ACH} studied pinching constants of
Hitchin's examples of K\"ahler metrics with $H>0$ on a Hirzebruch
surface. However, it remains unknown what is the best pinching
constant among all K\"ahler metrics with $H>0$ on such a surface. We
refer the interested reader to \cite{ACH} and \cite{YZ} for more
discussions.

\section{The proof}

The proof is motivated by the work of Petersen-Tao \cite{PT} on
Riemannian manifolds with almost quarter-pinched sectional
curvature.

Assume for some complex dimension $n \geq 2$, there exists a
sequence of compact K\"ahler manifolds $(M_k, J_k, g_k)$ ($k \geq
1$) whose holomorphic sectional curvature satisfies
$\frac{1}{2}-\frac{1}{4k} \leq H(M_k, g_k) \leq 1$, and none of
$(M_k, J_k)$ is biholomorphic to any of the three listed in the
conclusion of Theorem \ref{almost half}. In the following steps,
$c(n)$, maybe different from line to line, are all constants which
only depend on $n$.

\textbf{Step 1:} (A uniform lower bound for the maximal existence
time of the K\"ahler-Ricci flow)

It is well-known (\cite{KN} for example) that bounds on holomorphic
sectional curvature lead to bounds on Riemannian sectional curvature
and the full curvature tensor. In particular, for any unit
orthogonal vectors $X$ and $Y$ we have
\begin{align*}
Sec(X, Y)=R(X, Y, Y, X)=&\frac{1}{8}\
\Big[3H(\frac{X+JY}{\sqrt{2}})+3H(\frac{X-JY}{\sqrt{2}})
\\&-H(\frac{X+Y}{\sqrt{2}})-H(\frac{X-Y}{\sqrt{2}})
-H(X)-H(Y) \Big]
\end{align*}

From the works of Hamilton and Shi (\cite{H1}, \cite{H2}, and
\cite{Shi}, and Cor 7.7 in \cite{ChowKnopf} for an exposition of
these results on compact manifolds), we conclude that for any $k
\geq 1$, there exists a constant $T(n)>0$ such that the
K\"ahler-Ricci flow $(M_k, J_k, g_k(t))$ with the initial metric
$g_k$ is well-defined on the time interval $[0, T(n)]$ for any $k
\geq 1$. Moreover, we have $|Rm (M_k, g_k(t))|_{g(t)} \leq c(n)$ for
some constant $c(n)$ and all $t \in [0, T(n)]$ and $k \geq 1$.

\vskip 3mm

\textbf{Step 2:} (An improved curvature bound on a smaller time
interval)

This step is due to Ilmanen, Shi, and Rong (Proposition 2.5 in
\cite{Rong}). Namely, there exists constants $\delta(n)<T(n)$ and
$c(n)$ such that for any $t \in [0, \delta(n)]$
\begin{align*}
\min_{p, V \subset T_p(M_k)}Sec(M_k, g_k(t), p, V)-c(n)t &\leq
Sec(M_k, g_k(t), p, P) \\
&\leq \max_{p, V \subset T_p(M_k)}Sec(M_k, g_k(t), p, V)+c(n)t.
\end{align*}

It is direct to see that a similar estimate holds for holomorphic
sectional curvature. Indeed, there exists $\delta(n)$ and $c(n)$
such that for any $t \in [0, \delta(n)]$
\begin{align*}
\frac{1}{2}-c(n)\, t \leq H(M_k, g_k(t)) \leq 1+c(n) \, t.
\end{align*}

\textbf{Step 3:} (An injective radius bound on $g_k(t_0)$ for some
fixed $t_0 \in [0, \delta]$)

We observe that Klingenberg's injectivity radius estimates on
even-dimensional Riemannian manifolds with positive sectional
curvature (Theorem 5.9 in \cite{CE} or p178 of \cite{Petersen} for
example) can be adapted to show that for any $t \in [0,
\delta_1(n)]$ such that $inj (M_k, g_k(t)) \geq c(n)$ for some
constant $c(n)>0$. Indeed it will follow from the claim below:
\begin{claim}
Let $(M^n, g)$ be a compact K\"ahler manifold with positive
holomorphic sectional curvature $H \geq \delta>0$ and $Sec \leq K$
where the constant $K>0$, then the injectivity radius ${inj} (M^n,
g) \geq c(K)$ for some constant $c(K)$.
\end{claim}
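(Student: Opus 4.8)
The plan is to reduce to Klingenberg's classical injectivity-radius estimate, keeping the half of its proof that uses only the upper curvature bound and replacing the half that in the Riemannian setting relies on even-dimensionality and orientability by a K\"ahler analogue of Synge's Lemma. Recall that the estimate $\mathrm{inj}(M)\ge\pi/\sqrt{\sup Sec}$ (Theorem 5.9 in \cite{CE}) rests on two inputs: (i) if $Sec\le K$ then, by Rauch comparison, no geodesic has a conjugate point before arclength $\pi/\sqrt K$, so the conjugate radius of $M$ is $\ge\pi/\sqrt K$; and (ii) if $\mathrm{inj}(M)$ is strictly below the conjugate radius, Klingenberg's lemma produces a closed geodesic of length $2\,\mathrm{inj}(M)$, and Synge's Lemma is then used to exhibit a parallel unit normal field along it with negative index form, so that it can be shortened through closed curves --- which, via the long-homotopy lemma, contradicts (i). Input (i) uses only $Sec\le K$; the parity and orientability assumptions enter solely in (ii), through Synge's Lemma.

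So the first thing I would do is establish the K\"ahler substitute for Synge's Lemma. Let $\gamma$ be any closed geodesic of $(M,g)$, parametrized by arclength, of length $L$. Because the complex structure is parallel, $E:=J\dot\gamma$ is a parallel vector field along $\gamma$; it is periodic since $\dot\gamma$ is, and pointwise orthogonal to $\dot\gamma$ since $J$ is skew-symmetric. The plane $\operatorname{Span}\{\dot\gamma,J\dot\gamma\}$ is $J$-invariant, so the index form of $\gamma$ satisfies
\[
I(E,E)=\int_0^L\Big(|\nabla_{\dot\gamma}E|^2-R(E,\dot\gamma,\dot\gamma,E)\Big)\,dt=-\int_0^L R(\dot\gamma,J\dot\gamma,J\dot\gamma,\dot\gamma)\,dt=-\int_0^L H(\dot\gamma)\,dt\le-\delta L<0.
\]
Hence every closed geodesic of $(M,g)$ has index at least one, and in particular is not locally length-minimizing; this argument works in every dimension and uses only $H>0$. (Applied to a shortest loop in a nontrivial free homotopy class it also yields $\pi_1(M)=0$, the K\"ahler form of Synge's theorem, though simple connectivity is not needed below.)

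Next I would feed this into Klingenberg's argument. Suppose for contradiction that $\mathrm{inj}(M)<\pi/\sqrt K$. By (i) the conjugate radius of $M$ is $\ge\pi/\sqrt K>\mathrm{inj}(M)$, so Klingenberg's lemma applies and produces a closed geodesic $\gamma$ with $L(\gamma)=2\,\mathrm{inj}(M)<2\pi/\sqrt K$. The remaining steps of Klingenberg's proof --- the cut-locus analysis at a point realizing the injectivity radius and the long-homotopy lemma --- use only the conjugate-radius bound from (i) together with the fact, now supplied by the display above in place of Synge's Lemma, that $\gamma$ admits a length-decreasing variation through closed curves; they yield a contradiction exactly as in the even-dimensional case. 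Therefore $\mathrm{inj}(M)\ge\pi/\sqrt K$, and one may take $c(K)=\pi/\sqrt K$. In particular the bound depends only on $K$: the hypothesis $H\ge\delta>0$ is used only qualitatively, and the conjugate-radius estimate needs only $Sec\le K$.

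The main obstacle, and the one point deserving real care, is exactly the assertion used implicitly above: that even-dimensionality and orientability enter Klingenberg's proof of his injectivity estimate only through the application of Synge's Lemma to a short closed geodesic. To justify it one must go through the cut-locus construction and the long-homotopy lemma and verify that nothing else is sensitive to the parity of the dimension, so that the parallel field $J\dot\gamma$ is a genuine replacement. A secondary technicality is the standard one of upgrading the geodesic loop of length $2\,\mathrm{inj}(M)$ produced by the cut-locus analysis to an honestly smooth closed geodesic, which is handled as in Klingenberg's original treatment by taking the base point to realize the injectivity radius and passing to a shortest such loop.
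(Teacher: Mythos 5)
Your proposal is correct and is essentially an expanded version of the paper's own argument: the paper likewise runs the proof of Theorem 5.9 in \cite{CE}, replacing the Synge parallel normal field by $J\gamma'(t)$ along the short closed geodesic produced by Klingenberg's lemma, which is exactly where $H>0$ enters (the conjugate-radius half using only $Sec\le K$). The point you flag as needing care --- that parity and orientability enter Klingenberg's estimate only through Synge's Lemma --- is precisely what the paper asserts, so no further gap remains.
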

The proof of the above claim goes along as the proof of Theorem 5.9
in \cite{CE} except that we need to use the variational vector field
as $J \gamma'(t)$ where $\gamma(t)$ is a closed geodesic. This is
where we use $H>0$. In fact such a kind of estimate has been proved
in \cite{ChenTian} assuming positive bisectional curvature.

\vskip 2mm

\textbf{Step 4:} (An lower bound on orthogonal bisectional
curvatures of $(M_k, g_k(t)$)

This step is motivated by Peterson-Tao \cite{PT}, where they derived
a similar lower bound estimate for isotropic curvatures of almost
quarter-pinched Riemannian manifolds along Ricci flow.

\begin{claim}\label{Guzhang bounds}
There exists some constant $\delta_2(n)$ such that any $t \in [0,
\delta_2(n)]$, the orthogonal bisectional curvature of $(M_k,
g_k(t))$ has a lower bound $-\frac{1}{k} e^{c(n)t}$ for some
constant $c(n)>0$.
\end{claim}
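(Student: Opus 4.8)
The plan is to run a maximum-principle argument for the K\"ahler--Ricci flow in the spirit of Hamilton and of Petersen--Tao \cite{PT}, built on two inputs: (i) a quantitative form of Berger's pinching inequality \cite{Berger1960}, which at time $0$ turns the hypothesis $\tfrac{1}{2}-\tfrac{1}{4k}\leq H\leq 1$ into a lower bound for the orthogonal bisectional curvature; and (ii) the fact, proved in \cite{ChenX}, \cite{GuZhang} and \cite{Wilking}, that the cone of K\"ahler curvature operators with nonnegative orthogonal bisectional curvature is invariant under the K\"ahler--Ricci flow. For (i): Berger's inequality gives, on a K\"ahler manifold with $\lambda\leq H\leq\mu$, that $R(X,\overline X,Y,\overline Y)\geq\lambda-\tfrac{\mu}{2}$ for all unitary $X,Y$ with $g(X,\overline Y)=0$; applied to $g_k=g_k(0)$ this shows the orthogonal bisectional curvature of $g_k(0)$ is $\geq-\tfrac{1}{k}$ (in fact $\geq-\tfrac{1}{4k}$). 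It then remains to propagate this \emph{almost} nonnegativity along the flow, losing only an exponential factor.

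For (ii) I would apply Hamilton's maximum principle for systems to a time-dependent ``pinching family'' of cones. Let $I$ be the K\"ahler curvature operator of $\mathbb{CP}^n$ with a Fubini--Study metric, normalized so that its orthogonal bisectional curvature equals $1$ on every unitary orthogonal pair; then $I$ lies in the interior of the invariant cone $\mathcal C$ of curvature operators with nonnegative orthogonal bisectional curvature, and $\widetilde R(t):=R_k(t)+\phi(t)I$, with $\phi(t)=\tfrac{1}{k}e^{c(n)t}$ and $R_k(t)$ the curvature operator of $g_k(t)$, belongs to $\mathcal C$ precisely when the orthogonal bisectional curvature of $g_k(t)$ is $\geq-\phi(t)$ --- which is the conclusion sought. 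Along Hamilton's reaction ODE $\tfrac{d}{dt}R=R^2+R^{\#}$ one computes that $\tfrac{d}{dt}\widetilde R$ equals $\widetilde R^2+\widetilde R^{\#}$ (which keeps $\mathcal C$ invariant by (ii)) plus correction terms: a term $\dot\phi\,I$ pointing into the interior of $\mathcal C$, a term of size $O(\phi^2)$ along $I$ coming from $I^2+I^{\#}$ (a positive multiple of $I$ since $\mathbb{CP}^n$ is K\"ahler--Einstein), and terms linear in $R_k$ of norm $O(\phi)$. By Steps 1 and 2 the curvature of $g_k(t)$ is bounded by $c(n)$ on $[0,\delta(n)]$, so choosing the constant $c(n)$ in the exponent large enough and then $\delta_2(n)\leq\delta(n)$ small enough, the term $\dot\phi\,I$ dominates all the bad corrections on $[0,\delta_2(n)]$ and the total correction points into $\mathcal C$. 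Since $\widetilde R(0)\in\mathcal C$ by (i), Hamilton's maximum principle then forces $\widetilde R(t)\in\mathcal C$ for all $t\in[0,\delta_2(n)]$, which is exactly the Claim.

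The main obstacle is the input (ii) itself --- more precisely the boundary analysis behind it: one must know that at a curvature operator $R\in\partial\mathcal C$ every supporting linear functional, concretely $S\mapsto S(X,\overline X,Y,\overline Y)$ at a pair $(X,Y)$ realizing the vanishing minimum, is nonnegative on $R^2+R^{\#}$. This is the content of the Lie-algebraic maximum-principle analysis in \cite{Wilking} and \cite{GuZhang} (building on \cite{ChenX}), which I would invoke rather than redo; the only genuinely new point here is that all the constants must be made uniform in $k$, which is why the a priori curvature bounds of Steps 1--2, rather than mere smooth short-time existence, are essential for absorbing the perturbation $\phi(t)I$. A routine but unavoidable technical chore is the bookkeeping with the K\"ahler curvature operator on $\Lambda^{1,1}$, the associated $\#$-operation, and the verification that $I$ is a fixed direction (up to scaling) for the reaction ODE; once these are set up, the exponential rate $c(n)$ in $\phi$ is what a Gronwall-type estimate forces and the precise value of $\delta_2(n)$ is immaterial.
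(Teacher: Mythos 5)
Your proposal is correct, and it rests on exactly the same three pillars as the paper's argument: Berger's inequality to get the initial lower bound $-\tfrac{1}{4k}$ on the orthogonal bisectional curvature, the invariance of nonnegative orthogonal bisectional curvature under the K\"ahler--Ricci flow from \cite{ChenX}, \cite{GuZhang}, \cite{Wilking}, and the uniform curvature bounds of Step 1 to control the error terms and produce the exponential rate. The only difference is the formalization of the maximum principle. The paper works scalarly: with Uhlenbeck's trick it sets $m(t)=\min_{U\perp V}R(U,\overline U,V,\overline V)$, uses the first and second variation at a minimizing pair (following Proposition 1.1 of \cite{GuZhang}) together with the bound $|Rm|\leq c(n)$ to extract the Dini-derivative inequality $\tfrac{d^-m}{dt}\geq c(n)\,m$ when $m<0$, and integrates. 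You instead run Hamilton's maximum principle for systems on the shifted operator $\widetilde R=R+\phi(t)I$ and check that the reaction ODE points into the cone $\mathcal C$ at its boundary. These are two packagings of the same estimate: your supporting-functional computation $\ell(\dot\phi I)+\ell(\widetilde R^2+\widetilde R^{\#})+O(\phi)\geq 0$ is precisely the paper's inequality $\tfrac{d^-m}{dt}\geq c(n)m$ read at the minimizing pair, and both defer the essential null-vector analysis to \cite{GuZhang} and \cite{Wilking}. Your version buys a cleaner logical structure (no non-smooth minimum function, and it is the form closest to \cite{PT}) at the cost of the extra bookkeeping you acknowledge --- verifying that $I$ is a fixed direction of $R\mapsto R^2+R^{\#}$ and estimating the cross terms $\phi(\widetilde R\cdot I+2\,\widetilde R\#I)$ --- while the paper's scalar route is shorter but leans on a Dini derivative and an implicit Gronwall step. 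One small point worth making explicit in your write-up: the identification ``$\widetilde R(t)\in\mathcal C$ iff the orthogonal bisectional curvature of $g_k(t)$ is $\geq-\phi(t)$'' uses that the Fubini--Study operator has \emph{constant} orthogonal bisectional curvature on unitary orthogonal pairs, which is what makes the normalization $\ell(I)=1$ uniform over all supporting functionals.
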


\begin{proof}[Proof of Claim \ref{Guzhang bounds}]

Note that Berger's inequality \cite{Berger1960} (see also Lemma 2.5
in \cite{YZ} for an exposition) implies the orthogonal bisectional
curvature of $(M_k, g_k(0))$ is bounded from below by
$-\frac{1}{4k}$. Now the proof is based on a maximum principle
developed in \cite{H2}. In the setup of orthogonal bisectional
curvature, it is proved (in \cite{ChenX}, \cite{GuZhang}, and
\cite{Wilking}) that nonnegative orthogonal bisectional curvature is
preserved under K\"ahler-Ricci flow. For the sake of convenience, we
simply write $(M, J, g(t))$ where $t \in [0, T(n)]$ instead of the
sequence $(M_k, J_k, g_k (t))$.

Following \cite{H2}, one may use Uhlenbeck's trick. Consider a fixed
complex vector bundle $E \rightarrow M$ isomorphic to $TM
\rightarrow M$, with a suitable choice of bundle isomorphims
$\iota_t: E \rightarrow TM$, one obtain a fixed metric
$\iota_t^{\ast} g(t)$ on $E$. Now choose an unitary frame
$\{e_{\alpha}\}$ on $T^{1, 0}(E)$ which corresponds to an evolving
unitary frame on $TM$ via $\iota_t$, let $R_{\alpha
\overline{\alpha} \beta \overline{\beta}}$ denote $R(\iota_t^{\ast}
g(t), e_{\alpha}, \overline{e_{\alpha}}, e_{\beta},
\overline{e_{\beta}})$, the evolution equation of bisectional
curvature reads (\cite{GuZhang} for example)
\begin{equation}
\frac{\partial}{\partial t} R_{\alpha \overline{\alpha} \beta
\overline{\beta}}=\Delta_{g(t)} R_{\alpha \overline{\alpha} \beta
\overline{\beta}}+\sum_{\mu, \nu} (R_{\alpha \overline{\alpha} \mu
\overline{\nu}} R_{\beta \overline{\beta} \mu \overline{\nu}}
-|R_{\alpha \overline{\mu} \beta \overline{\nu}}|^2+|R_{\alpha
\overline{\beta} \mu \overline{\nu}}|^2)  \label{evolution}
\end{equation}
Now assume $m(t)=\min_{U \perp V \in T^{1, 0}(E)} R(U, \overline{U},
V, \overline{V})$, and assume $m(t_0)=R_{\alpha \overline{\alpha}
\beta \overline{\beta}}$ for some $t_0$ and some point $p \in M$.
Consider the first and the second variation of $R_{\alpha
\overline{\alpha} \beta \overline{\beta}}$, follow the proof of
Proposition 1.1 in \cite{GuZhang} and the curvature bounds in Step 1
we conclude from (\ref{evolution}) that $\frac{d^{-} m(t)}{d
t}|_{t=t_0} \geq c(n) m(t_0) $ whenever $m(t_0)<0$. Therefore there
exists some time interval $[0, \delta_2(n)]$ either $m(t) \geq 0$ or
$\frac{d^{+}(-m(t))}{d t} \leq c(n) (-m(t))$ if $m(t)<0$. Recall
$m(0) \geq -\frac{1}{4k}$, in any case we end up with $m(t) \geq
-\frac{1}{k} e^{c(n) t}$ for all $t \in [0, \delta_2(n)]$.
\end{proof}

\vskip 3mm

\textbf{Step 5:} (A contradiction after taking the limit of $(M_k, ,
J_k, g_k(t))$)

Let us consider $(M_k, J_k, g_k(t))$ where $t \in (0, \delta_2(n)]$
be a sequence of K\"ahler-Ricci flow, from previous steps we
conclude that there exist $\delta_3(n)$ and $c(n)$ such that
\begin{enumerate}
\item   $|Rm|_{g_k(t)} (M_k, g_k(t)) \leq c(n)$ for and $k \geq 1$
and $t \in [0, \delta_3(n)]$.

\item  $inj (M_k, g_k(t_0)) \geq \frac{1}{c(n)}$ for some
$t_0 \in [0, \delta_3(n)]$

\item $Ric (M_k, g_k(t_0)) \geq \frac{1}{c(n)}$ for any $k$, this follows
from Step 2 and 4.
\end{enumerate}

It follows from Hamilton's compactness theorem of Ricci flow
\cite{H3} that $(M_k, J_k, g_k(t))$ converges to a compact limiting
K\"ahler-Ricci flow $(M_{\infty}, J_{\infty}, g_{\infty}(t))$ where
$t \in (0, \delta_2(n)]$. It follows from Step 4 that
$g_{\infty}(t)$ has nonnegative orthogonal bisectional curvature and
$H(g_{\infty}(t))>0$ for any $0<t \leq t_0$. Note that all $M_k$ and
$M_{\infty}$ are simply-connected (\cite{Tsukamoto}), it follows
from \cite{ChenX}, \cite{GuZhang}, and \cite{Wilking} that
$(M_{\infty}, g_{\infty}(t_0))$ must be of the following form.
\begin{equation}
(\mathbb{CP}^{k_1}, g_{k_1}) \times \cdots \times
(\mathbb{CP}^{k_r}, g_{k_r}) \times (N^{l_1}, h_{l_1}) \times \cdots
(N^{k_r}, h_{l_s}).  \label{product list}
\end{equation}
Where each of $(\mathbb{CP}^{k_i}, g_{k_i})$ has nonnegative
bisectional curvature and each of $(N^{l_i}, h_{l_i})$ is a compact
irreducible Hermitian symmetric spaces of rank $\geq 2$ with its
canonical K\"ahler-Einstein metric. Now consider a time $t_1<t_0$
close to $t=0$, it follows from Step 2 that $g_{\infty}(t_1)$ is
close to $\frac{1}{2}$-holmorphic pinching and also have the same
decomposition as (\ref{product list}). Indeed the decomposition
(\ref{product list}) is reduced to exactly the list in the
conclusion of Proposition \ref{half pinching}. To see it one may
also apply the formula of piching constants of a product of metrics
with $H>0$ (\cite{ACH}). However $(M_k, J_k)$ is not biholomorphic
to any of the three three listed in the conclusion of Theorem
\ref{almost half}. Now we have a sequence of K\"ahler manifolds
$(M_{\infty}, \phi_k^{\ast} J_k, \phi_k^{\ast} g_k(t_1))$ converging
to $(M_{\infty}, J_{\infty}, g_{\infty}(t_1))$ where $\phi_k:
M_{\infty} \rightarrow M_k$ are the diffeomorphisms from Hamilton's
compactness theorem. This is a contradiction since any compact
Hermitian symmetric space is infinitesimally rigid, i.e.
$H^{1}(M_{\infty}, \Theta_{M_{\infty}})=0$ where $\Theta$ is the
sheaf of holomorphic vector fields on $M_{\infty}$ (see Bott
\cite{Bott}). This finishes the proof of Theorem \ref{almost half}.

\section{A remark}

Note that Proposition \ref{half pinching} works in the case of the
local one half pinching, therefore it seems natural to ask:

\begin{question}
Does Theorem \ref{almost half} hold if we replace the global almost
one half pinching to the local one?
\end{question}

Another optimistic hope is that $H>0$ is preserved along
K\"ahler-Ricci flow as long as the initial metric has a suitable
large holomorphic pinching constant. We refer to \cite{YZ} for more
discussions.


\begin{thebibliography}{99}




\bibitem {ACH} Alvarez, A.; Chaturvedi, A.; Heier, G..
\emph{Optimal pinching for the holomorphic sectional curvature of
Hitchin's metrics on Hirzebruch surfaces.} Contemp. Math., 654,
2015, 133-142.


\bibitem {AHZ} Alvarez, A.; Heier, G.; Zheng, F.. \emph{On projectivized
vector bundles and positive holomorphic sectional curvature.}
arXiv:1606.08347.



\bibitem {Berger1960} Berger, M..
\emph{Pincement riemannien et pincement holomorphe.} Ann. Scuola
Norm. Sup. Pisa (3), 14, 1960, 151-159. and \emph{Correction d'un
article ant\'erieur}. ibid. (3), 16, (1962), 297.



\bibitem {Bott} Bott, R.. \emph{Homogeneous vector bundles.} Ann. of Math.
(2) 66 (1957), 203-248.


\bibitem {CE} Cheeger, J.; Ebin, D. G.. \emph{Comparison theorems in Riemannian
geometry.} Revised reprint of the 1975 original. AMS Chelsea
Publishing, 2008.


\bibitem {Chen1977} Chen, B.-y.. \emph{Extrinsic spheres in K\"ahler
manifolds, II.}  Michigan Math. J. 24 (1977), no. 1, 97-102.


\bibitem {ChenX} Chen, X. X.. \emph{On K\"ahler manifolds with positive orthogonal
bisectional curvature.} Adv. Math. 215 (2007), no. 2, 427-445.

\bibitem {ChenTian} Chen, X. X.; Tian, G.. \emph{Ricci flow on
K\"ahler-Einstein surfaces.} Invent. Math. 147 (2002), no. 3,
487-544.

\bibitem {ChowKnopf} Chow, B.; Knopf, D.. \emph{The Ricci flow: an introduction.}
Mathematical Surveys and Monographs, 110. American Mathematical
Society, 2004.


\bibitem {GuZhang} Gu, H.; Zhang, Z.. \emph{An extension of Mok's
theorem on the generalized Frankel conjecture.} Sci. China Math. 53
(2010), no. 5, 1253-1264.

\bibitem {H1} Hamilton, R. S. \emph{Three-manifolds with positive
Ricci curvature.} J. Differential Geom. 17 (1982), no. 2, 255-306.

\bibitem {H2} Hamilton, R. S. \emph{Four-manifolds with positive curvature
 operator.} J. Differential Geom. 24 (1986), no. 2, 153-179.

\bibitem {H3} Hamilton, R. S. \emph{A compactness property for solutions
of the Ricci flow.} Amer. J. Math. 117 (1995), no. 3, 545-572.

\bibitem {HeierWong2015} Heier, G.; Wong, B.. \emph{On projective K\"ahler
manifolds of partially positive curvature and rational
connectedness.} arXiv:1509.02149.

\bibitem {Hitchin}  Hitchin, N..
\emph{On the curvature of rational surfaces.} Differential geometry,
Proc. Sympos. Pure Math., Vol. XXVII, Part 2, 65-80. Amer. Math.
Soc., Providence, R. I., 1975.




\bibitem{KN} Kobayashi, S.; Nomizu, K.. Foundations of
differential geometry. Vol. II. Reprint of the 1969 original. John
Wiley \& Sons, Inc., New York, 1996.


\bibitem{Petersen}  Petersen, P. \emph{Riemannian geometry.}
Second edition. Graduate Texts in Mathematics, 171. Springer, New
York, 2006.

\bibitem{PT} Petersen, P.; Tao, T..
\emph{Classification of almost quarter-pinched manifolds.} Proc.
Amer. Math. Soc. 137 (2009), no. 7, 2437-2440.

\bibitem {Rong} Rong, X.. \emph{On the fundamental groups of manifolds of
positive sectional curvature.} Ann. of Math. (2) 143 (1996), no. 2,
397-411.

\bibitem {Ros} Ros, A.. \emph{A characterization of seven compact Kaehler
submanifolds by holomorphic pinching.} Ann. of Math. (2) 121 (1985),
no. 2, 377-382.

\bibitem {Shi} Shi, W.-X.. \emph{Deforming the metric on complete Riemannian
manifolds.} J. Differential Geom. 30 (1989), no. 1, 223-301.

\bibitem {Tsukamoto}  Tsukamoto, Y.. \emph{On K\"ahlerian manifolds
with positive holomorphic sectional curvature.} Proc. Japan Acad. 33
(1957), 333-335.

\bibitem {Wilking} Wilking, B.. \emph{A Lie algebraic approach to Ricci flow
invariant curvature conditions and Harnack inequalities.} J. Reine
Angew. Math. 679 (2013), 223-247.

\bibitem {YZ} Yang, B.; Zheng, F.. \emph{Hirzebruch manifolds and positive
holomorphic sectional curvature.} arXiv:1611.06571.


\bibitem {Yau1} Yau, S.-T.. \emph{A review of complex differential geometry.}
Several complex variables and complex geometry, Part 2 (Santa Cruz,
CA, 1989), 619-625, Proc. Sympos. Pure Math., 52, Part 2, Amer.
Math. Soc., 1991.

\bibitem {Yau2} Yau, S.-T.. \emph{Open problems in geometry.}
1-28, Proc. Sympos. Pure Math., 54, Part 1, Amer. Math. Soc., 1993.



\end{thebibliography}
\end{document}